\numberwithin{equation}{section}
\newtheorem{thm}{Theorem}[section]
\newtheorem{prop}[thm]{Proposition}
\theoremstyle{definition}
\newtheorem{rem}[thm]{Remark}
\newcommand\R{{\mathbb R}}
\newcommand\C{{\mathbb C}}
\newcommand\Z{{\mathbb Z}}
\newcommand\Tor{{\mathbb T}}
\newcommand\Loc{{\mathrm{loc}}}
\newcommand\Vmp{{\zeta}}
\newcommand\goto{\mathop{\longrightarrow}}
\newcommand\MScN[1]{\href{http://www.ams.org/mathscinet-getitem?mr=#1}{\nolinkurl{(#1)}}}
\newcommand\DOI[1]{\href{http://dx.doi.org/#1}{(doi: \nolinkurl{#1})}}
\newcommand\LINK[1]{\href{#1}{(link: \nolinkurl{#1})}}
\newcommand\LAM{\lambda}
\title{Standing waves of the complex Ginzburg-Landau equation}
\author[Thierry Cazenave, Fl\'avio Dickstein and Fred B.~Weissler]{}
\subjclass[2010] {35Q56, 35C08}
 \keywords{Standing waves, complex Ginzburg-Landau equation}
\thanks{Research supported by the ``Brazilian-French Network in Mathematics"}
\thanks{Fl\'avio Dickstein  was partially supported by CNPq (Brasil) and by a ``Research in Paris" grant from the City of Paris.}
\thanks{Fred B. Weissler benefited from a sabbatical leave (CRCT) from the University of Paris 13.}
\begin{document}
\maketitle

\centerline{\scshape Thierry Cazenave}
\medskip
{\footnotesize
 \centerline{Universit\'e Pierre et Marie Curie \& CNRS}
 \centerline{Laboratoire Jacques-Louis Lions}
   \centerline{B.C.~187, 4 place Jussieu}
   \centerline{75252 Paris Cedex 05, France}
   \centerline{email address: {\href{mailto:thierry.cazenave@upmc.fr}{\tt thierry.cazenave@upmc.fr}}}
  %  \centerline{URL:  {\href{http://www.ljll.math.upmc.fr/cazenave/}{\tt http://www.ljll.math.upmc.fr/cazenave/}} }
}

\medskip

\centerline{\scshape Fl\'avio Dickstein}
\medskip
{\footnotesize
 \centerline{Instituto de Matem\'atica}
 \centerline{Universidade Federal do Rio de Janeiro}
   \centerline{ Caixa Postal 68530}
   \centerline{21944--970 Rio de Janeiro, R.J., Brazil}
   \centerline{email address: {\href{mailto:flavio@labma.ufrj.br}{\tt flavio@labma.ufrj.br}}}
  %  \centerline{URL:  {\href{http://dma.im.ufrj.br/docentes/flavio/}{\tt http://dma.im.ufrj.br/docentes/flavio/}} }
}

\medskip

\centerline{\scshape Fred B.~Weissler}
\medskip
{\footnotesize
 \centerline{Universit\'e Paris 13,  Sorbonne Paris  Cit\'e}
 \centerline{CNRS UMR 7539 LAGA}
   \centerline{99 Avenue J.-B. Cl\'e\-ment}
   \centerline{F-93430 Villetaneuse, France}
   \centerline{email address: {\href{mailto:weissler@math.univ-paris13.fr}{\tt weissler@math.univ-paris13.fr}}}
 %   \centerline{URL:  {\href{http://www.math.univ-paris13.fr/ann/indiv/index.php?clef=WFoySGl8Tr}{\tt http://www.math.univ-paris13.fr/ann/}} }
}

\begin{abstract}
We prove the existence of nontrivial standing wave solutions of the complex Ginzburg-Landau equation  $\phi _t = e^{ i\theta } \Delta \phi  + e^{ i\gamma  }  |\phi |^\alpha \phi $ with periodic boundary conditions. Our result includes all values of $\theta $ and $\gamma $ for which $\cos \theta \cos \gamma >0$, but requires that $\alpha >0$ be sufficiently small.
\end{abstract}

%\bigskip 
%\tableofcontents

\section{Introduction} \label{sIntro} 

We consider the complex Ginzburg-Landau equation
\begin{equation} \label{HGL} %\tag{HGL}
 \phi _t = e^{ i\theta } \Delta \phi  + e^{ i\gamma  }  |\phi |^\alpha \phi ,
\end{equation} 
where $\alpha >0$,
both on the whole space $\R^N $,  with periodic boundary conditions, and
on a bounded domain $\Omega $ of $\R^N $ with Dirichlet boundary conditions.
We look for standing wave solutions of the form 
\begin{equation} \label{fStaWav} 
\phi (t,x) = e^{i\omega t} u (x)
\end{equation} 
 with $ \omega \in \R$. The resulting equation for $u $ is then
\begin{equation} \label{fEllip1} 
e^{ i\theta }\Delta u + e^{i \gamma } |u |^\alpha u = i\omega  u .
\end{equation} 

Equation~\eqref{HGL} is used to model such phenomena as  superconductivity, chemical turbulence and various types of fluid flows. 
See~\cite{DoeringGHN} and the references cited therein. 
Local and global well-posedness of~\eqref{HGL}, on both $\R^N $ and a domain $\Omega \subset \R^N $, 
are known under various boundary conditions and assumptions on the parameters, see e.g.~\cite{DoeringGL, GinibreVu, GinibreVd, LevermoreO, LevermoreOd, MischaikowM, OkazawaYu, OkazawaYd, OkazawaYt, OkazawaYq}.
Concerning standing wave solutions, the particular case of the nonlinear Schr\"o\-din\-ger equation (i.e. $\theta= \pm \gamma  =\pm \frac {\pi } {2}$) leads to the elliptic equation $\Delta u \pm  |u|^\alpha u \pm \omega u=0$. This equation is the object of a literature too vast to be cited here. 
Another well-known case is $\omega =0$, i.e. stationary solutions. Then necessarily $\gamma =\theta $ modulo $2\pi $ (see Remark~\ref{eComCon}~\eqref{eComCon:2} below) and so equation~\eqref{fEllip1} reduces to $\Delta u+ |u|^\alpha u=0$, which is a special case of the previous equation. 
In the other cases, we are not aware of mathematical results concerning the existence of standing wave solutions. Numerous papers discuss the existence of special solutions (holes, fronts, pulses, sources, sinks, etc), see e.g.~\cite{ChungC, Cruz-PachecoLL, DescalziAT, Doelman, LanGC, LegaF, MancasC, MohamadouNK, PoppSAK, PoppSK, vanSaarloosH}. 

Throughout this paper, all the function spaces are made up of complex-valued functions, but are considered as real Hilbert or Banach spaces. For example, $L^2 (\Omega ) $ is the real Hilbert space of all complex-valued square integrable functions on $\Omega $ with the (real) inner product
\begin{equation} \label{fHyp1} 
( u, v)  _{ L^2 } = \Re \int _\Omega  u  \overline{v}. 
\end{equation} 
In addition, we consider the $N$ dimensional torus $\Tor^N= (\R / 2\pi \Z )^N$ and the space 
\begin{equation} \label{fDenfH} 
H^2 (\Tor^N) = \{ u\in H^2_\Loc (\R^N )  ;\, u  \text{ is $2\pi $-periodic in all variables} \},
\end{equation} 
equipped with the norm of $H^2 (\Omega )$ with $\Omega = (0,2\pi )^N$.

Our first result is the existence of spatially periodic standing wave solutions of~\eqref{HGL} for small $\alpha $.   

\begin{thm} \label{eMain2} 
Suppose  $\gamma ,\theta $ satisfy
\begin{equation} \label{eMain:1} 
- \frac {\pi } {2} < \theta , \gamma < \frac {\pi } {2}. 
\end{equation} 
It follows that there exist  $\alpha _0>0 $ and continuous maps $u : (0,\alpha _0 ) \to H ^2( \Tor^N)$ and $\omega : [0,\alpha _0 ] \to \R$ such that 
for every $\alpha \in (0,\alpha _0)$, $u= u(\alpha )$ is a nontrivial solution of~\eqref{fEllip1} with $\omega =\omega (\alpha )$. 
In particular, the resulting function $\phi $ given by~\eqref{fStaWav} is a standing wave solution of~\eqref{HGL}.

\end{thm}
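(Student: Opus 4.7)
The plan is to construct an explicit family of plane-wave solutions. I would look for solutions of~\eqref{fEllip1} of the form
\[ u(x) = c\, e^{i k \cdot x}, \qquad c > 0, \ k \in \Z^N \setminus \{0\}, \]
for which $|u| \equiv c$ is constant on $\Tor^N$, so the nonlinearity reduces to $|u|^\alpha u = c^\alpha u$. Combined with $\Delta u = -|k|^2 u$, equation~\eqref{fEllip1} then collapses, after cancellation of $e^{i k \cdot x}$, to the complex algebraic identity
\[ c^\alpha e^{i\gamma} - |k|^2 e^{i\theta} = i\omega. \]

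Next, I would take $k = (1, 0, \ldots, 0)$ so that $|k|^2 = 1$, and separate real from imaginary parts using $\omega \in \R$: the system becomes $c^\alpha \cos\gamma = \cos\theta$ and $\omega = c^\alpha \sin\gamma - \sin\theta$. Hypothesis~\eqref{eMain:1} guarantees $\cos\theta > 0$ and $\cos\gamma > 0$, so the first equation has the unique positive root $c(\alpha) = (\cos\theta / \cos\gamma)^{1/\alpha}$, and the second yields $\omega(\alpha) = \sin(\gamma - \theta)/\cos\gamma$, a value independent of $\alpha$. Setting $u(\alpha)(x) = c(\alpha) e^{i x_1}$, the resulting map $u \colon (0, \infty) \to H^2(\Tor^N)$ is continuous (since $c(\alpha)$ is), while $\omega$, being constant, extends continuously to $[0, \alpha_0]$ for any $\alpha_0 > 0$.

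This approach has essentially no obstacle, and in fact delivers the conclusion for \emph{all} $\alpha > 0$ without using the smallness assumption in the statement; the restriction $\alpha < \alpha_0$ becomes pertinent only if one seeks solutions beyond this plane-wave family. For solutions not of plane-wave form (e.g., with non-constant modulus), the natural strategy would instead be to rescale $u = \rho v$ with $\rho^\alpha = \mu$, isolate the formal linear limit $e^{i\theta}\Delta v + \mu e^{i\gamma} v = i\omega v$ at $\alpha = 0$, pick a bifurcation point tied to an eigenvalue $|k|^2$ of $-\Delta$ on $\Tor^N$, and continue in $\alpha > 0$ by an implicit function theorem. The hard parts would then be the non-smoothness of $v \mapsto |v|^\alpha v$ near $v = 0$ uniformly as $\alpha \to 0^+$ (which forces $v$ to stay bounded away from zero) and the factoring out of the $U(1)$ phase and spatial translation symmetries of~\eqref{fEllip1} to make the linearization invertible on a transverse space.
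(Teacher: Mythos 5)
Your plane-wave construction is correct and does prove the literal statement: with $u=c\,e^{ix_1}$ the equation~\eqref{fEllip1} reduces to $c^\alpha e^{i\gamma}-e^{i\theta}=i\omega$, and under~\eqref{eMain:1} the real part is solvable for $c=(\cos\theta/\cos\gamma)^{1/\alpha}>0$ while the imaginary part gives $\omega=\sin(\gamma-\theta)/\cos\gamma$; all the continuity claims are immediate, and you even get every $\alpha>0$ rather than just small $\alpha$. This is, however, a genuinely different route from the paper, which proves Theorem~\ref{eMain} on a bounded domain with Dirichlet conditions by a bifurcation-from-an-eigenvalue argument (Theorem~\ref{eTech1}), applies it on $\Omega=(0,\pi)^N$, and then extends to $\R^N$ by odd reflection across the faces. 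What the paper's approach buys is a family of standing waves with genuinely non-constant modulus (they vanish on the hyperplane lattice $\pi\Z^N$), whereas your solutions are the classical constant-modulus Stokes/plane waves of the Ginzburg--Landau equation, well known in the physics literature; the paper's machinery is also forced on you for Theorem~\ref{eMain}, where no explicit ansatz is available. Your closing paragraph correctly anticipates the paper's strategy in outline, but two of the difficulties you flag are handled differently there: the invertibility of the linearization is obtained not by quotienting out the $U(1)$ and translation symmetries but by adding the amplitude parameter $\mu$ as a second scalar unknown and restricting $\zeta$ to the orthogonal complement of $\C\varphi$ (the kernel computation uses that $a e^{i\gamma}=ib$ forces $a=b=0$ under~\eqref{eMain:1}); and the non-smoothness of $v\mapsto|v|^\alpha v$ as $\alpha\to0^+$ is controlled not by keeping $v$ bounded away from zero but by the fact that the eigenfunction $\varphi$, being analytic on the connected set $\Omega$, is nonzero a.e., which is exactly what Proposition~\ref{eDiffer2} requires for continuity of the derivative at $(0,\mu_0,\omega_0,0)$.
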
 

Note that it is part of the statement of Theorem~\ref{eMain2} that $ |u|^\alpha u\in L^2_\Loc (\R^N ) $, since both $\Delta u$ and $u$ belong to $L^2_\Loc (\R^N ) $. Our proof of Theorem~\ref{eMain2} proceeds by first constructing solutions of the equation~\eqref{fEllip1} on  the set $\Omega = (0,\pi )^N$ which vanish on the boundary $\partial \Omega $, and then extending these solutions to $\R^N $ by reflection.  Thus, to prove Theorem~\ref{eMain2}, we need first to prove a similar result, but on a bounded domain of $\R^N $, which we now describe. 

We consider a bounded, connected open subset   $\Omega $ of $\R^N $ and we set
\begin{equation} \label{fDefH:1} 
H= \{ u\in H^1_0 (\Omega ) ;\, \Delta u\in L^2 (\Omega )  \},
\end{equation} 
so that $H$ equipped with the scalar product
\begin{equation} \label{fDefH:2} 
(u,v)_H= \Re \int _\Omega u \overline{v}+ \Re \int _\Omega \Delta  u \Delta   \overline{v},  
\end{equation} 
 is a real Hilbert space and $H\hookrightarrow H^1_0 (\Omega ) $. We show the following result.

\begin{thm} \label{eMain} 
Suppose $\Omega $ is a bounded, connected, open subset of $\R^N $.
Let $\gamma ,\theta $ satisfy~\eqref{eMain:1} and let $H$ be defined by~\eqref{fDefH:1}-\eqref{fDefH:2}. 
It follows that there exist  $\alpha _0>0$ and continuous maps $u : (0,\alpha _0 ) \to H $ and $\omega : [0,\alpha _0 ] \to \R$ such that 
for every $\alpha \in (0,\alpha _0)$, $u= u(\alpha )$ is a nontrivial solution of~\eqref{fEllip1} with $\omega =\omega (\alpha )$.
In particular, the resulting function $\phi $ given by~\eqref{fStaWav} is a standing wave solution of~\eqref{HGL}.
\end{thm}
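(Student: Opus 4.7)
I would prove Theorem~\ref{eMain} by bifurcating from the first Dirichlet eigenvalue of $-\Delta $ on $\Omega $. Let $\lambda _1 >0$ be this eigenvalue, which is simple because $\Omega $ is connected, with positive eigenfunction $\varphi _1 \in H$. Under the hypothesis~\eqref{eMain:1}, the complex linear equation $\mu e^{i\gamma }-i\omega =\lambda _1 e^{i\theta }$ has the unique real solution
\[
\mu _0 =\frac{\lambda _1 \cos \theta }{\cos \gamma }>0, \qquad \omega _0 =-\frac{\lambda _1 \sin (\theta -\gamma )}{\cos \gamma },
\]
and then $v=\varphi _1 $ solves the \emph{linear} problem $e^{i\theta }\Delta v+\mu _0 e^{i\gamma }v=i\omega _0 v$. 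This is the seed around which I perturb.

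To convert~\eqref{fEllip1} into a perturbation of this linear problem I rescale $u=\beta v$ with $\beta >0$ and set $\mu =\beta ^\alpha >0$, so that~\eqref{fEllip1} becomes
\[
e^{i\theta }\Delta v+\mu e^{i\gamma }|v|^\alpha v-i\omega v=0.
\]
It then suffices to produce a continuous family $\alpha \mapsto (v_\alpha ,\omega _\alpha ,\mu _\alpha )$ on $[0,\alpha _0 ]$ solving this equation with $(v_0 ,\omega _0 ,\mu _0 )=(\varphi _1 ,\omega _0 ,\mu _0 )$. Setting $u_\alpha =\mu _\alpha ^{1/\alpha }v_\alpha $ and keeping $\omega =\omega _\alpha $ then yields the desired standing waves, nontrivial because $v_\alpha $ remains close to $\varphi _1 \ne 0$. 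In general $\mu _0 \ne 1$, so $\mu _\alpha ^{1/\alpha }$ may blow up or vanish as $\alpha \to 0^+$, exactly matching the statement that $u$ is asserted continuous only on the open interval $(0,\alpha _0 )$ while $\omega $ extends continuously to $\alpha =0$.

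To break the remaining scaling and gauge invariances I impose $\int _\Omega v\,\overline{\varphi _1 }=\|\varphi _1 \|_{L^2 }^2 $, that is, write $v=\varphi _1 +w$ with $w$ in the real-codimension-$2$ closed subspace $T:=\{w\in H:\int _\Omega w\,\overline{\varphi _1 }=0\}$. Define $G _\alpha :T\times \R \times \R \to L^2 (\Omega )$ by
\[
G _\alpha (w,\omega ,\mu )=e^{i\theta }\Delta (\varphi _1 +w)+\mu e^{i\gamma }|\varphi _1 +w|^\alpha (\varphi _1 +w)-i\omega (\varphi _1 +w).
\]
The choice of $\mu _0 ,\omega _0 $ gives $G _0 (0,\omega _0 ,\mu _0 )=0$, and the partial derivative at this point is
\[
D(\tilde w,\tilde \omega ,\tilde \mu )=e^{i\theta }(\Delta +\lambda _1 )\tilde w+(\tilde \mu e^{i\gamma }-i\tilde \omega )\varphi _1 .
\]
Simplicity of $\lambda _1 $ makes $\Delta +\lambda _1 :H\to L^2 $ self-adjoint Fredholm of index $0$ with kernel $\C \varphi _1 $ and range $(\C \varphi _1 )^\perp $; its restriction to $T$ is an isomorphism onto $(\C \varphi _1 )^\perp $. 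The finite-dimensional block $(\tilde \omega ,\tilde \mu )\mapsto (\tilde \mu e^{i\gamma }-i\tilde \omega )\varphi _1 $ is an isomorphism $\R ^2 \to \C \varphi _1 $ because its $2\times 2$ real matrix has determinant $\cos \gamma \ne 0$, which is the second use of~\eqref{eMain:1}. Hence $D$ is a Banach-space isomorphism, and a parametric implicit function theorem (or, equivalently, a uniform-in-$\alpha $ Banach contraction based on inverting $D$) produces the required continuous family of solutions for $\alpha \in [0,\alpha _0 ]$ small.

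The main technical obstacle is the analysis of the nonlinearity $v\mapsto |v|^\alpha v$. For each fixed $\alpha >0$ it is $C^1 $ from $H$ to $L^2 $, provided $\alpha _0 $ is small enough that $H\hookrightarrow L^{2(1+\alpha )}$, but its dependence on $\alpha $ is not smooth at $\alpha =0$: formally $|v|^\alpha =1+\alpha \log |v|+o(\alpha )$, which degenerates near the zeros of $\varphi _1 $ (in particular on $\partial \Omega $). A smooth IFT therefore does not apply directly. What I would prove instead are quantitative estimates of the form $\|\,|v_1 |^\alpha v_1 -|v_2 |^\alpha v_2 \,\|_{L^2 }\le C\|v_1 -v_2 \|_H $ with $C$ uniform in a fixed ball about $\varphi _1 $ and in $\alpha \in [0,\alpha _0 ]$, together with uniform continuity of $G _\alpha \to G _0 $ on that ball as $\alpha \to 0^+$. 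This is enough for a Banach contraction whose constants are uniform in $\alpha $, and produces a continuous (but not smooth) solution family $\alpha \mapsto (w_\alpha ,\omega _\alpha ,\mu _\alpha )$, which is exactly the regularity claimed by the theorem.
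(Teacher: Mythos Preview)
Your approach is essentially identical to the paper's: introduce the extra scaling parameter $\mu$, bifurcate from a simple Dirichlet eigenfunction via the implicit function theorem in the variables $(w,\omega,\mu)$, verify that the linearized operator splits as an isomorphism $T\to(\C\varphi_1)^\perp$ plus a $2\times2$ block onto $\C\varphi_1$ with nonzero determinant, and then recover $u=\mu^{1/\alpha}v$. The paper packages the perturbation step as a separate result (Theorem~\ref{eTech1}) valid for \emph{any} simple eigenvalue, and therefore needs analyticity of eigenfunctions to ensure $\varphi\ne0$ a.e.; you take the first eigenvalue, where positivity gives this for free.

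One point to sharpen: for the Banach contraction (equivalently, the IFT) to go through with constants uniform in $\alpha$, a uniform Lipschitz bound on $v\mapsto|v|^\alpha v$ together with $G_\alpha\to G_0$ is not sufficient---what is actually needed is that the \emph{Fr\'echet derivative} $\partial_v(|v|^\alpha v)$ converges to the identity in operator norm as $(\alpha,v)\to(0,\varphi_1)$, so that $\|I-D^{-1}D_xG_\alpha(x)\|<1$ on a fixed ball. The paper isolates exactly this statement (Proposition~\ref{eDiffer2}), and its proof hinges on $\varphi_1\ne0$ a.e.: then $|v_n|^{\alpha_n}\to1$ a.e.\ for $v_n\to\varphi_1$, $\alpha_n\to0$, and dominated convergence does the rest. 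Once you have that, a form of the implicit function theorem requiring only continuity of the partial derivative at the base point (the paper cites Zeidler) closes the argument without any need for joint smoothness in $\alpha$.
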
 

We prove Theorem~\ref{eMain} by a perturbation argument from the case $\alpha =0$, using  the implicit function theorem.
Indeed, equation~\eqref{fEllip1} with $\alpha =0$ reduces to the eigenvalue problem $- \Delta u  =( e^{i (\gamma -\theta )}- i\omega  e^{- i\theta })u$. 
As is well known, all eigenvalues of $-\Delta $ with Dirichlet boundary conditions are positive real numbers, while $\lambda ^\star= : e^{i (\gamma -\theta )}- i\omega  e^{- i\theta }$ is not in general real. 
It turns out that $\lambda ^\star$ is real precisely when $\omega = \frac {\sin (\gamma -\theta )} {\cos \theta }$, in which case $\lambda ^\star= \frac {\cos \gamma } {\cos \theta }$. 
Unfortunately, this value of $\lambda ^\star $ is not always an eigenvalue of $-\Delta $. 
To overcome this problem, we introduce another parameter $\mu >0$ and consider the equation
\begin{equation} \label{fEqV1} 
 \Delta v +\mu e^{i(\gamma -\theta )}  |v|^\alpha  v -i\omega e^{-i \theta }v =0.
\end{equation} 
(See also Remark~\ref{eMu0eq1}.)
Note that if $\alpha >0$, then a solution of~\eqref{fEqV1} can be turned into a solution of~\eqref{fEllip1}  by a simple multiplicative factor. 

Equation~\eqref{fEqV1} in the case $\alpha =0$ now becomes $-\Delta v= (\mu e^{i(\gamma -\theta )} -i\omega e^{-i \theta } ) v$. Given any $\lambda >0$, one sees that  $\mu _0 e^{i(\gamma -\theta )} -i\omega _0 e^{-i \theta } =\lambda $ if and only if  
\begin{equation}  \label{fHyp3} 
\omega _0  =   \LAM\frac {\sin (\gamma -\theta )} {\cos \theta },
\end{equation} 
and
\begin{equation}  \label{fHyp4} 
\mu _0 = \LAM \frac {\cos \theta } {\cos \gamma }.
\end{equation} 
The implicit function theorem now yields the following result.

\begin{thm} \label{eTech1} 
Suppose $\Omega $ is a bounded, connected, open subset of $\R^N $ and let $H$ be defined by~\eqref{fDefH:1}-\eqref{fDefH:2}.
Let
\begin{equation}  \label{fHyp1:1} 
\LAM = \LAM( -\Delta ),
\end{equation} 
be an  eigenvalue of $-\Delta $ in $L^2 (\Omega ) $ with domain $H$ and $\varphi $ a corresponding eigenvector such that
\begin{equation} \label{fHyp1:1b1}
\int _\Omega | \varphi| ^2=1.
\end{equation} 
Assume that $\lambda $ is a simple eigenvalue, in the sense that the corresponding eigenspace is $\C \varphi $. (For instance, $\LAM$ can be the first eigenvalue of $-\Delta $.)
Let $\gamma ,\theta $ satisfy~\eqref{eMain:1} and let $\omega _0$ and $\mu _0$ be defined by~\eqref{fHyp3}-\eqref{fHyp4}.  
It follows that there exist $\alpha _0>0$ and continuous maps $v : [0,\alpha _0 ] \to H$, $\mu : [0,\alpha _0 ] \to \R$ and $\omega : [0,\alpha _0 ] \to \R$ such that $v(0) =  \varphi  $, $\mu (0)= \mu_0$, $\omega (0)=  \omega _0 $ and such that~\eqref{fEqV1} holds for  all $0\le \alpha \le \alpha _0 $.
\end{thm}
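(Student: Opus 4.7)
The plan is to apply the implicit function theorem (IFT) to the map
\begin{equation*}
F\colon [0,\alpha _0 )\times H\times \R\times \R\to L^2 (\Omega ),\quad F(\alpha ,v,\mu ,\omega )=\Delta v+\mu e^{i(\gamma -\theta )}|v|^\alpha v-i\omega e^{-i\theta }v.
\end{equation*}
By the very choice~\eqref{fHyp3}--\eqref{fHyp4} of $\mu _0 $ and $\omega _0 $, one has $\mu _0 e^{i(\gamma -\theta )}-i\omega _0 e^{-i\theta }=\LAM$; hence $F(0,\varphi ,\mu _0 ,\omega _0 )=\Delta \varphi +\LAM\varphi =0$, which provides the required base point.

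The partial derivative of $F$ in $(v,\mu ,\omega )$ at this base point (using that $|v|^\alpha v=v$ when $\alpha =0$) is the bounded real-linear map
\begin{equation*}
L\colon (h,\delta \mu ,\delta \omega )\longmapsto (\Delta +\LAM)h+\delta \mu \, e^{i(\gamma -\theta )}\varphi -i\delta \omega \, e^{-i\theta }\varphi .
\end{equation*}
This map is not injective, since $L(i\varphi ,0,0)=0$; this reflects the rotational symmetry $v\mapsto e^{i\beta }v$ of~\eqref{fEqV1}. To break the degeneracy, I would pick $\varphi $ real-valued (possible, since $\LAM$ is an eigenvalue of a real self-adjoint operator), decompose $L^2 (\Omega )=\C\varphi \oplus W$ orthogonally with respect to the real inner product~\eqref{fHyp1}, set $H_0 =H\cap W$, and seek $v$ in the affine subspace $\varphi +H_0 $. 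This amounts to imposing the two real normalizations $(v,\varphi )_{L^2 }=1$ and $(v,i\varphi )_{L^2 }=0$.

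It then remains to verify that the restricted derivative $\widetilde L\colon H_0 \times \R\times \R\to L^2 (\Omega )$ is a topological isomorphism. Because $\LAM$ is a simple eigenvalue of the self-adjoint operator $-\Delta $ on $H$ with compact resolvent, the restriction $\Delta +\LAM\colon H_0 \to W$ is bijective. For the two scalar parameters, the real-linear map $T\colon (\delta \mu ,\delta \omega )\mapsto \delta \mu \, e^{i(\gamma -\theta )}\varphi -i\delta \omega \, e^{-i\theta }\varphi $ has, in the real basis $\{\varphi ,i\varphi \}$ of $\C\varphi $, the matrix
\begin{equation*}
\begin{pmatrix}
\cos (\gamma -\theta ) & -\sin \theta \\
\sin (\gamma -\theta ) & -\cos \theta
\end{pmatrix},
\end{equation*}
whose determinant equals $-\cos \gamma $, nonzero under hypothesis~\eqref{eMain:1}. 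Combined with the decomposition $L^2 =W\oplus \C\varphi $, this shows $\widetilde L$ is bijective. The IFT then yields continuous maps $\alpha \mapsto (h(\alpha ),\mu (\alpha ),\omega (\alpha ))$ defined on some $[0,\alpha _0 ]$ with $h(0)=0$, $\mu (0)=\mu _0 $, $\omega (0)=\omega _0 $, and setting $v(\alpha )=\varphi +h(\alpha )$ gives the stated solution family.

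The main obstacle is the regularity of $F$ near $\alpha =0$. The nonlinear term $(\alpha ,v)\mapsto |v|^\alpha v$ is not jointly $C^1 $ there: formally, $\partial _\alpha (|v|^\alpha v)=|v|^\alpha (\log |v|)v$, which is logarithmically singular on $\{v=0\}$. What one actually needs for the IFT to yield a continuous solution curve up to and including $\alpha =0$ is that $F$ be continuous on $[0,\alpha _0 )\times H\times \R\times \R$ and $C^1 $ in $(v,\mu ,\omega )$ with derivative continuous in $\alpha $ down to $0$. Verifying this requires working in a neighborhood of $\varphi $ consisting of essentially bounded functions, so that $|v|^\alpha v$ and its $v$-derivative are controlled by $\|v\|_\infty $, and invoking dominated convergence as $\alpha \to 0^+ $; this is the most delicate part of the argument.
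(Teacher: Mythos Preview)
Your overall strategy---apply the implicit function theorem to $F$, restrict $v$ to the affine slice $\varphi + H_0$ with $H_0$ the orthogonal complement of $\C\varphi$, and verify bijectivity of the linearization via the block structure $(\Delta+\lambda)\colon H_0\to W$ plus the $2\times 2$ map $T$---is exactly what the paper does. Your determinant computation $\det=-\cos\gamma$ is the same nondegeneracy check the paper makes (the paper phrases it as injectivity of $z=a+ib\mapsto ae^{i(\gamma-\theta)}-ibe^{-i\theta}$, equivalently $ae^{i\gamma}=ib\Rightarrow a=b=0$). Taking $\varphi$ real is harmless but not needed; since $\lambda$ is real, $\overline{\varphi}$ is also an eigenfunction and the orthogonality argument goes through regardless.

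The genuine gap is in your treatment of continuity of $\partial_{(v,\mu,\omega)}F$ at $\alpha=0$. Working in $L^\infty$ does \emph{not} fix the problem: the Fr\'echet derivative of $v\mapsto|v|^\alpha v$ contains the multiplier $|v|^\alpha$, and as $\alpha\to 0^+$ one has $|v(x)|^\alpha\to \mathbf{1}_{\{v\neq 0\}}(x)$ pointwise, not $1$. If $v$ vanished on a set of positive measure, the derivative would converge to a projection, not the identity, and $\partial_v F$ would be discontinuous at $\alpha=0$ no matter how good your $L^\infty$ control is. The paper's resolution is different: it shows (Proposition~\ref{eDiffer2}) that $\partial_v G(\alpha,v)\to I$ in $\mathcal{L}(L^p,L^2)$ as $(\alpha,v)\to(0,v_0)$ \emph{provided} $v_0\neq 0$ a.e., and then observes that the eigenfunction $\varphi$ is real-analytic on the connected domain $\Omega$, hence cannot vanish on a set of positive measure. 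That analyticity observation is the missing ingredient in your outline; once you have it, dominated convergence in $L^p$ (not $L^\infty$) suffices, with $p=2(\widetilde\alpha+1)$ chosen so that $H\hookrightarrow L^p$.
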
 

The above results call for several remarks. 
Since our proof of Theorem~\ref{eTech1} is based on a perturbation argument, we have no information on the size of $\alpha _0$. In addition, based on what is known about standing waves of the nonlinear Schr\"o\-din\-ger equation and stationary solutions of the nonlinear heat equation, one would expect that, at least in space dimension $N\ge 2$, there would exist an infinite family of standing wave solutions (all with the same $\omega $). Our results do not address this question at all. Another important issue is the stability (both linear and dynamical) of the standing waves. 

We next make a few remarks concerning the conditions on $\theta $ and $\gamma $.

\begin{rem} \label{eComCon} 
Suppose for example equation~\eqref{fEllip1} is set on a bounded, connected subset $\Omega $ of $\R^N $ with  Dirichlet boundary conditions. (Similar calculations can be made in the case of periodic boundary conditions.)
Let $u\in H^1_0 (\Omega ) \cap L^{\alpha +2}(\Omega )$ with $\Delta u\in L^2 (\Omega ) $, $u\not \equiv 0$ be a solution of~\eqref{fEllip1}.  
Since
\begin{equation} \label{fIntPart} 
\int _\Omega  \overline{u}\Delta u= -\int _\Omega  |\nabla u|^2,
\end{equation} 
we deduce from~\eqref{fEllip1} that 
\begin{equation} \label{fNEZ1} 
e^{i\theta }\int _\Omega  |\nabla u|^2= e^{i \gamma } \int _\Omega  |u|^{\alpha +2}
- i \omega \int _\Omega  |u|^2.
\end{equation} 
We now can draw the following consequences.
\begin{enumerate}[{\rm (i)}] 

\item \label{eComCon:1} 
Considering the real part of~\eqref{fNEZ1}, we obtain
\begin{equation} \label{fNE1} 
\cos \theta \int _\Omega  |\nabla u|^2= \cos \gamma  \int _\Omega  |u|^{\alpha +2}. 
\end{equation} 
Thus we see that either $\cos \theta =\cos \gamma =0$ or else $\cos \theta \cos \gamma >0$.
If $\cos \theta =\cos \gamma =0$, then the equation~\eqref{HGL} becomes the nonlinear Schr\"o\-din\-ger equation $ i \phi _t = \pm   \Delta \phi  \pm   |\phi |^\alpha \phi $, whose standing wave solutions have been extensively studied.  Assume now  $\cos \theta \cos \gamma >0$. Changing $(\gamma ,\theta , \omega )$ to $(\gamma +\pi , \theta +\pi , -\omega )$ leaves the equation invariant, so we may assume that $\cos \gamma >0$ and $\cos \theta >0$. Therefore, we may assume without loss of generality that~\eqref{eMain:1} holds.

\item \label{eComCon:2} 
It follows easily from~\eqref{fNEZ1} and~\eqref{eMain:1}  that $\omega =0$ (i.e. $u$ is a stationary solution of~\eqref{HGL}) if and only if 
$\gamma =\theta $ modulo $2\pi $.

\item \label{eComCon:3} 
Finally, observe that changing $(u,\gamma , \theta , \omega )$ to $( \overline{u} , -\gamma , -\theta , -\omega )$ leaves the equation invariant.
\end{enumerate} 
\end{rem} 

The next remark gives some variants of the main results.

\begin{rem} \label{eRem10} 
\begin{enumerate}[{\rm (i)}] 

\item \label{eRem10:1} 
If $u$ is a periodic solution  of~\eqref{fEllip1}, as in Theorem~\ref{eMain2}, then for every positive integer $n$, $u_n (x)= n^{\frac {2} {\alpha }} u(nx)$  is also a solution of~\eqref{fEllip1} in $H ^2( \Tor^N)$, but with $\omega  $ replaced by $n^2\omega $.  In this way, we obtain infinitely many solutions on $\Tor^N$ (starting from one given solution), but for values of $\omega  $ that change with the solution.

\item \label{eRem10:3} 
In Theorem~\ref{eMain2}, we may replace the torus $(\R / 2\pi \Z)^N$ by $(\R/ 2\ell_1\Z) \times \cdots \times ( \R/2 \ell_N \Z)$, where $\ell_1, \dots, \ell_N>0$. It suffices to apply Theorem~\ref{eMain} with $\Omega = (0, \ell_1) \times \cdots \times (0, \ell_N )$ instead of $\Omega =(0,\pi )^N$. The above remark about rescaling applies in this situation as well.

\item \label{eRem10:4} 
In the case where $\Omega $ is the unit ball of $\R^N $, there is version of Theorem~\ref{eTech1} in the space $L^2 _{ {\mathrm{rad}} } (\Omega )$ of radially symmetric functions.
Note that all the eigenvalues of $-\Delta $ in $L^2 _{ {\mathrm{rad}} } (\Omega )$ with Dirichlet boundary conditions are simple. 
Thus for every integer $n$, there exists $\alpha _0>0$ such that for $0<\alpha <\alpha _0$ there exist $n$ different standing wave solutions of~\eqref{HGL}  (assuming $\gamma \not = \theta $). Indeed, the solutions are different because for sufficiently small $\alpha $, the corresponding value of $\omega $ is close to $\omega _0$ given by~\eqref{fHyp3}; and the values of $\omega _0$ corresponding to different eigenvalues are all different. It would be interesting to know if these solutions are related by dilation, as is true for the eigenfunctions of $-\Delta $ in $L^2 _{ {\mathrm{rad}} } (\Omega )$. 

\end{enumerate} 
\end{rem} 

\begin{rem} 
The method we use to prove
Theorem~\ref{eMain} is not valid in the case $\Omega =\R^N $. 
In fact, the conclusion of Theorem~\ref{eMain}  is false if $\Omega =\R^N $. Indeed, suppose $\theta =\gamma $, in which case $\omega =0$ by the same argument as in Remark~\ref{eComCon}~\eqref{eComCon:2}. 
In this case, equation~\eqref{fEllip1} becomes $-\Delta u=  |u|^\alpha u$, which has no nontrivial solutions in $H^1 (\R^N ) $ if $\alpha $ is Sobolev subcritical (i.e. $(N-2)\alpha <4$).
This fact is a consequence of the Poho\v zaev identity~\cite{Pohozaev}. 
For the precise formulation needed here, see~\cite[Corollary~1, p.321]{BerestyckiL}.
\end{rem}

\section{Proof of Theorem~$\ref{eTech1}$}
In this section, we prove Theorem~\ref{eTech1}. 

\begin{proof} [Proof of Theorem~$\ref{eTech1}$]
It follows from~\eqref{fHyp3}-\eqref{fHyp4}  that
\begin{equation} \label{fHyp5} 
\mu  _0e^{i(\gamma -\theta )}-i\omega _0e^{-i\theta }=\LAM.
\end{equation} 
Therefore, $\varphi $ is a solution of~\eqref{fEqV1} with $\alpha =0$, $\mu =\mu _0$ and $\omega =\omega _0$.   For $\alpha >0$ small, $\mu $ and $\omega $ close to $\mu _0$ and $\omega _0$, we seek a solution $v$ of~\eqref{fEqV1} of the form $v= \varphi + \Vmp$ with
$\Vmp \in H_1$, where $H_1$ is the orthogonal complement of $\C \varphi $ in $H$. 
The main tool we use is the implicit function theorem. The first order of business is to define an appropriate mapping $F$.
 We fix
\begin{equation}  \label{fHyp16} 
0<\widetilde{\alpha } <\infty   \text{ such that }(N-2)\widetilde{\alpha }  \le 2, 
\end{equation} 
so that $H \hookrightarrow L^{2( \widetilde{\alpha } +1)} (\Omega ) $ by Sobolev's embedding.
We set
\begin{equation} \label{fHyp17} 
X= \R ^2 \times H_1,
\end{equation} 
and we define the  map $F: (-\infty , \widetilde{\alpha } ] \times X \to L^2 (\Omega ) $ by
\begin{align} 
F(\alpha , \mu , \omega , \Vmp ) & = \Delta v  + \mu e^{i(\gamma -\theta )}  g(\alpha ,v) -i \omega e^{ -i \theta }v, \label{fHyp14} \\
v &= \varphi +\Vmp, \label{fHyp14v2} 
\end{align} 
where the function $g:\R\times \C \to \C$ is given by
\begin{equation} \label{fDefg}
g(\alpha ,v) = \begin{cases} 
 |v|^\alpha v & \alpha >0, \\
 v & \alpha \le 0. 
\end{cases}   
\end{equation} 
It follows  that if $F(\alpha , \mu , \omega , \Vmp)=0$ and $\alpha >0$, then $v  $ is a solution of equation~\eqref{fEqV1}.
Since
\begin{equation}  \label{fHyp1:3} 
\Delta \varphi  + \LAM \varphi =0,
\end{equation}  
we deduce from~\eqref{fHyp5} that
\begin{equation}  \label{fHyp14:1} 
F(0 , \mu _0, \omega _0, 0 ) = 0.
\end{equation} 
It follows (for instance from Proposition~\ref{eDiffer1} below)
that  the map $(\alpha ,v) \mapsto  g(\alpha ,v(\cdot ))  $ is continuous $(-\infty ,  \widetilde{\alpha }]\times  H \to L^2 (\Omega ) $, from which we deduce that $F$ is continuous $(-\infty , \widetilde{\alpha } ] \times X \to L^2 (\Omega )$.
Furthermore, if $ \alpha \le   \widetilde{\alpha }$, then by Proposition~\ref{eDiffer1} the map $v\mapsto  g(\alpha ,v(\cdot ))  $ is differentiable everywhere on $H$, so that  the map 
$( \mu , \omega , \Vmp ) \mapsto F(\alpha , \mu , \omega , \Vmp ) $ is differentiable everywhere.
In addition, using~\eqref{fDParv1}, we have
\begin{align}
\frac {\partial F} {\partial \mu }(\alpha , \mu , \omega , \Vmp )&=e^{i(\gamma -\theta )} g(\alpha ,v (\cdot ))  ,  \label{fHyp20} \\
\frac {\partial F} {\partial \omega  } (\alpha , \mu , \omega , \Vmp) &=  
-i e^{ -i \theta } v,\label{fHyp21} 
\end{align}  
and 
\begin{multline} \label{fHyp22} 
\frac {\partial F} {\partial \Vmp }(\alpha , \mu , \omega , \Vmp) w \\= 
\begin{cases} 
\Delta w   +\mu e^{i(\gamma -\theta )}[ |v|^\alpha w + \alpha  |v|^{\alpha -2} v \Re ( \overline{v}w)]  -i \omega e^{-i\theta }w  & \alpha >0,\\ 
\Delta w +  [\mu e^{i(\gamma -\theta )}  -i \omega e^{-i\theta }] w  & \alpha \le 0.
\end{cases} 
\end{multline} 
We now show that the derivative
\begin{equation*} 
  \frac {\partial F } {\partial (\mu ,\omega ,\Vmp)}  (0 , \mu _0, \omega _0, 0): X \to L^2 (\Omega ) 
\end{equation*} 
 is a bijection. 
Indeed, we deduce from~\eqref{fHyp20}--\eqref{fHyp22}  that 
\begin{align}
\frac {\partial F} {\partial \mu }(0 , \mu _0, \omega _0, 0 )&=e^{i(\gamma -\theta )}\varphi   ,  \label{fHyp20:1} \\
\frac {\partial F} {\partial \omega  } (0 , \mu _0, \omega _0, 0 ) &=  
-i e^{ -i \theta } \varphi  ,\label{fHyp21:1} \\
\frac {\partial F} {\partial \Vmp } (0 , \mu _0, \omega _0, 0 ) w&= 
\Delta w  +  \LAM w .\label{fHyp22:1} 
\end{align}  
where we used~\eqref{fHyp5} in the last identity.  
Therefore,
\begin{equation} \label{fHyp26} 
 \Bigl[ \frac {\partial F} {\partial (\mu  ,\omega  ,\Vmp  ) }(0 , \mu _0, \omega _0, 0 ) \Bigr] \cdot  (a,b,w)=  A (a,b,w),
\end{equation}  
where
\begin{equation} \label{fHyp27} 
A (a,b,w)= 
  ae^{i(\gamma -\theta )}\varphi   -i b e^{ -i \theta } \varphi  +\Delta w+ \LAM w.
\end{equation} 
We first claim that the kernel of $A$  is trivial. Indeed, suppose
\begin{equation} \label{fHyp28} 
A (a,b,w)= 0.
\end{equation} 
Multiplying the equation~\eqref{fHyp28}  by $ \overline{\varphi}  $, integrating by parts on $\Omega $ and using~\eqref{fHyp1:1b1}  and~\eqref{fHyp1:3}, we obtain
$0=    ae^{i(\gamma -\theta )}  -i b e^{ -i \theta }$, i.e. $ae^{i\gamma }= ib$. 
Using~\eqref{eMain:1}, we conclude that $a=b=0$.
It then follows from~\eqref{fHyp28} that $\Delta w+\LAM w=0$, so that $w\in \C \varphi $. Since $\C \varphi \cap H_1 =\{0\}$, this proves the claim.  

We next claim that $A$ is surjective. Let $M(z)=ae^{i(\gamma -\theta )}  -i b e^{ -i \theta }$, for $z=a+bi$. Considering $\C$ as a real linear space, we see that $M$ is a linear operator $\C \to \C$. As shown above, $\ker{M}=\{0\}$, and so  M is a bijection.
Thus, given $f\in L^2 (\Omega ) $ there exist  $a, b\in \R$ such that
\begin{equation}  \label{fHyp29} 
 a e^{i(\gamma -\theta )}  -i b e^{-i \theta } =\int _\Omega f \overline{\varphi}  .
\end{equation}   
It follows from~\eqref{fHyp1:1b1} and~\eqref{fHyp29} that $f- a e^{i(\gamma -\theta )} \varphi  + i  b e^{-i \theta } \varphi $ belongs to the orthogonal of $\C\varphi $. Therefore, there exists a unique  $w \in H_1 $ such that
\begin{equation*} 
\Delta w+ \LAM w = f - a e^{i(\gamma -\theta )} \varphi + i  b e^{-i \theta } \varphi,
\end{equation*} 
i.e. $A(a,b, w)=f$. This proves surjectivity.

At this point, we wish to apply the implicit function theorem~\cite[Theorem~ 4.B, p.150]{Zeidler}. 
The only condition that we have not yet verified is that the map $\partial _{ (\mu ,\omega ,\Vmp)} F$ given by 
\begin{equation} \label{fHyp30} 
\begin{cases} 
(-\infty , \widetilde{\alpha } ] \times X \to {\mathcal L}(X, L^2 (\R^N ) ) \\ \displaystyle 
(\alpha ,\mu ,\omega , \Vmp) \mapsto   \frac {\partial F } {\partial (\mu ,\omega ,\Vmp)}  (\alpha ,\mu ,\omega , \Vmp)
\end{cases} 
\end{equation} 
is continuous at the point $(0, \mu _0, \omega _0, 0)$. 
This is an immediate consequence of Proposition~\ref{eDiffer2}, since $ \varphi \ne 0$ a.e. in $\Omega $. To see this last property, we note that $\varphi $ is analytic in the connected, open set $\Omega $, see e.g.~\cite{Friedman}, so that it cannot vanish on a set of positive measure.  

By the above cited the implicit function theorem,  there exist $\alpha _0 >0$ and continuous  maps $\Vmp : [0,\alpha _0 ] \to H_1 $, $\mu : [0,\alpha _0 ] \to \R$ and $\omega : [0,\alpha _0 ] \to \R$ such that $\Vmp (0) = 0  $, $\mu (0)= \mu_0$, $\omega (0)=  \omega _0 $ and such that
$F(\alpha , \mu (\alpha ), \omega (\alpha ), \Vmp (\alpha ))  =0$
for $0\le \alpha \le \alpha _0 $. This completes the proof (with $v(\alpha )= \varphi + \Vmp (\alpha )$).
\end{proof} 

\begin{rem} \label{eMu0eq1} 
The parameter $\mu $ is not only useful to ensure that the equation~\eqref{fEqV1} has the form $\Delta v+ \lambda v=0$ when $\alpha =0$, where $\lambda $ is an eigenvalue of $-\Delta $. 
It also provides a second parameter in the implicit function theorem so that the linearized operator is  bijective.
\end{rem} 

\section{Proof of Theorems~\ref{eMain} and~\ref{eMain2}}

\begin{proof} [Proof of Theorem~$\ref{eMain}$]
We apply Theorem~\ref{eTech1}. 
(Note that there always exists $\LAM$ as in the statement, for example
$\LAM$ can be the first eigenvalue of $-\Delta $.)
Observe that 
\begin{equation} \label{fHyp2:1} 
\cos \gamma  > 0,\quad  \cos \theta >0,
\end{equation} 
by~\eqref{eMain:1}, so that $\mu_0> 0$ by \eqref{fHyp4}. Therefore, we may choose $\alpha _0$ small enough so that $\mu(\alpha)> 0$ for $\alpha\in  [0,\alpha _0 ] $.  
Thus $u= \mu^{\frac {1} {\alpha }}v$ is well defined and satisfies~\eqref{fEllip1}
(since $v$ satisfies~\eqref{fEqV1}).
Since $v(0)= \varphi $, we have $v(\alpha )\not \equiv 0$, hence $u(\alpha )\not \equiv 0$, for $\alpha >0$ sufficiently small. Thus, by choosing $\alpha _0>0$ possibly smaller, we have $u(\alpha )\not \equiv 0$ for $0<\alpha <\alpha _0$. 
\end{proof} 

\begin{proof} [Proof of Theorem~$\ref{eMain2}$]
We apply Theorem~\ref{eMain} with $\Omega = (0,\pi )^N$, and we obtain $0< \alpha _0 < \frac {2} {(N-2)_+}$ and continuous maps $ \widetilde{u}  : (0,\alpha _0 ) \to H $ (defined by~\eqref{fDenfH}) and $\omega : [0,\alpha _0 ] \to \R$ such that  for every $\alpha \in (0,\alpha _0)$, $ \widetilde{u} =  \widetilde{u} (\alpha )$ is a solution of~\eqref{fEllip1} on $\Omega $. We now extend $ \widetilde{u} $ to $\R^N $ by symmetry. More precisely, given $x\in \R^N $, there exists a unique family of integers $(k_j) _{ 1\le j\le N }$ such that $k_j\pi\le x_j < (k_j +1)\pi$, and we set 
\begin{equation} 
u(x)= (-1)^{\sum_{ j=1 }^N k_j } \widetilde{u}(  \widetilde{x} ), 
\end{equation} 
where $ \widetilde{x}_j= x_j- k_j\pi $. 
It follows that $u\in H^1 _\Loc(\R^N ) \cap L^{2(\alpha +1)} _\Loc (\R^N ) $ is a solution of~\eqref{fEllip1} on $\R^N $, and by standard elliptic regularity, $u\in H^2 _\Loc(\R^N )$. Since $u$ is clearly $2\pi $-periodic in all variables, we see that $u\in H ^2( \Tor^N)$. 
\end{proof} 

\begin{rem} 
\begin{enumerate}[{\rm (i)}] 

\item Recall that $u(\alpha )$ constructed in the proof of Theorem~\ref{eMain} is given by 
$u (\alpha )= \mu (\alpha )^{\frac {1} {\alpha }}v (\alpha )$, where the functions $v(\alpha )$ and $\mu (\alpha )$ are given by Theorem~\ref{eTech1}. Furthermore, $\mu (\alpha )\to \mu _0$ given by~\eqref{fHyp4} as $\alpha \to 0$.  Clearly, if $\mu _0>1$ (i.e. $\lambda > \frac {\cos \gamma } {\cos \theta }$), then $ \| u(\alpha )\| _{ L^2 }\to \infty $ as $\alpha \to 0$, while if $\mu _0<1$ (i.e. $\lambda < \frac {\cos \gamma } {\cos \theta }$), then $ \| u(\alpha )\| _{ L^2 }\to 0$ as $\alpha \to 0$.
In the latter case, the curve $u(\alpha )$ bifurcates from the trivial branch of solutions of~\eqref{fEllip1} at $\alpha =0$.  The same conclusions hold for the solutions constructed in Theorem~\ref{eMain2}.  

\item In the context of Theorem~\ref{eTech1}, suppose $\lambda $ and $ \widetilde{\lambda } $ are two different simple eigenvalues of $-\Delta $ with corresponding eigenvectors $\varphi $ and $ \widetilde{\varphi } $. Let $v,\mu , \omega $ and $ \widetilde{v} ,  \widetilde{\mu } ,  \widetilde{\omega } $ be the resulting continuous maps constructed by Theorem~\ref{eTech1}.  Since $v(\alpha ) \to \varphi $ and $ \widetilde{v} (\alpha ) \to  \widetilde{\varphi}  $ as $\alpha \to 0$, it is clear that $v(\alpha )\not =  \widetilde{v} (\alpha ) $ for $\alpha >0$ small. 
Similarly, it is clear (see~\eqref{fHyp3}) that  $\omega (\alpha )\not =  \widetilde{\omega } (\alpha ) $ for $\alpha >0$ small. 
The functions $u (\alpha )= \mu (\alpha )^{\frac {1} {\alpha }}v (\alpha )$ and $ \widetilde{u}  (\alpha )=  \widetilde{\mu}  (\alpha )^{\frac {1} {\alpha }} \widetilde{v}  (\alpha )$ are solutions, respectively, of equation~\eqref{fEllip1} and of equation~\eqref{fEllip1} with $ \widetilde{\omega } $ instead of $\omega $. Thus it is also clear that $u (\alpha )\not =  \widetilde{u } (\alpha ) $ for $\alpha >0$ small. 

\item 
Suppose $\Omega $ is the unit ball of $\R^N $ and consider radially symmetric standing waves. As noted above (see Remark~\ref{eRem10}~\eqref{eRem10:4}) all the eigenvalues $0<\lambda _1<\lambda _2< \cdots$ of  $-\Delta $ in $L^2 _{ {\mathrm{rad}} } (\Omega )$ with Dirichlet boundary conditions are simple. And so, Theorem~\ref{eTech1} can be applied with each $\lambda =\lambda _k$ for each $k\ge 1$, thus producing an infinite family of curves $u_k(\alpha )$ of standing waves. 
If $\lambda _k < \frac {\cos \gamma } {\cos \theta }$ (which can happen only for finitely many $k$), then $ \| u_k (\alpha )\| _{ L^2 }\to 0$ as $\alpha \to 0$. On the other hand, if $\lambda _k > \frac {\cos \gamma } {\cos \theta }$ (which necessarily happens  infinitely many $k$), then $ \| u_k (\alpha )\| _{ L^2 }\to \infty $ as $\alpha \to 0$.
Observe that the number of eigenvalues such that $\lambda _k < \frac {\cos \gamma } {\cos \theta }$ obviously depends on $\theta $ and $\gamma $.

\end{enumerate} 
\end{rem}

\appendix

\section{An extension of the map $(\alpha ,v) \mapsto  |v|^\alpha v$} \label{aDiff} 
In this section we  construct an explicit extension of the map $(\alpha ,v) \mapsto  |v|^\alpha v$ to include negative values of $\alpha $ and we study its differentiability with respect to $v$. 

We consider the function $g: \R\times \C\to \C$ defined by~\eqref{fDefg}
and we  define $H: \R \times \C \times \C \to \C$ by
\begin{equation} \label{fDefH}
H(\alpha ,v,u) = \begin{cases} 
|v|^\alpha u+ \alpha |v|^{\alpha -2} v \Re (  \overline{v} u ) & \alpha >0, v\not = 0,\\ 
0 & \alpha >0, v=0, \\
u & \alpha \le 0 .
\end{cases}   
\end{equation} 
It follows easily that $g\in C(\R\times \C , \C)$  and 
$H$ is continuous, except at the points $(0,0, u)$ with $u\not = 0$ (where it is discontinuous).
Moreover,  $g$ is differentialble with respect to $v$ at every point $(\alpha ,v)\in \R\times \C$
 (where $\C$ is considered as a real Hilbert space),  and
\begin{equation}  \label{fGrpv} 
\partial _v  g(\alpha ,v)u = H(\alpha ,v,u),
\end{equation} 
for all $\alpha \in  \R$ and $u,v\in \C$.

Let $\Omega $ be a bounded open subset of $\R^N $ and, given $1\le r\le \infty $, let $L^r (\Omega ) $   be the usual Lebesgue space of complex valued functions, equipped with its standard norm $ \| \cdot  \| _{ L^r }$, considered as a real Banach space. 
We fix $a>0$ and set $p=2(a+1)$. Given  $v\in L^p (\Omega ) $, we define
\begin{equation} \label{fDG} 
G(\alpha ,v) (\cdot ) =  g(\alpha , v(\cdot )),
\end{equation} 
where $g$ is given by~\eqref{fDefg}.

\begin{prop} \label{eDiffer1} 
The map $G$ is continuous $(-\infty ,a] \times L^{p} (\Omega ) \to   L^2 (\Omega ) $. 
Moreover, $G$ is Fr\'echet differentiable with respect to $v$ everywhere on $(-\infty ,a] \times L^{p} (\Omega )$ and 
\begin{equation} \label{fDParv}
\partial _v G(\alpha ,v)= L _{ \alpha ,v },  
\end{equation} 
with
\begin{equation} \label{fDParv1}
L _{ \alpha ,v } u= H(\alpha ,v(\cdot ), u(\cdot )),  
\end{equation} 
for all $\alpha \in (-\infty ,a) $, $v\in L^p (\Omega ) $ and $u\in L^2 (\Omega ) $, where $H$ is defined by~\eqref{fDefH}.
\end{prop}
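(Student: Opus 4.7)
The proof has three parts. First, I check that the linear operator $L_{\alpha,v}\colon L^p(\Omega)\to L^2(\Omega)$ is bounded. If $\alpha\le 0$ this is immediate since $L_{\alpha,v}=\mathrm{Id}$ and $L^p(\Omega)\hookrightarrow L^2(\Omega)$. If $\alpha>0$ then from \eqref{fDefH} one reads off $|H(\alpha,v,u)|\le (1+\alpha)|v|^\alpha|u|$, and H\"older's inequality with conjugate exponents $p/(p-2)$ and $p/2$ gives
$$\|L_{\alpha,v}u\|_{L^2}^2\le C\int_\Omega|v|^{2\alpha}|u|^2\le C\|v\|_{L^{2\alpha p/(p-2)}}^{2\alpha}\|u\|_{L^p}^2.$$
Because $\alpha\le a=(p-2)/2$, the exponent $2\alpha p/(p-2)$ is at most $p$, and by the continuous embedding on the bounded domain $\Omega$ this yields $\|L_{\alpha,v}u\|_{L^2}\le C\|v\|_{L^p}^\alpha\|u\|_{L^p}$.

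Second, for the joint continuity of $G$, note that $g$ is continuous on $\R\times\C$. By the standard subsequence principle, it suffices to check convergence along a subsequence of $(\alpha_n,v_n)\to(\alpha,v)$ for which $v_n\to v$ a.e.\ and $|v_n|\le h$ a.e.\ for some $h\in L^p(\Omega)$. The pointwise bound $|g(\alpha_n,v_n(x))|^2\le 1+|v_n(x)|^p\le 1+h(x)^p$ (valid for every $\alpha_n\le a$, as one checks separately on $\{|v_n|\le 1\}$ and $\{|v_n|>1\}$) furnishes an $L^1$-majorant, and dominated convergence yields $G(\alpha_n,v_n)\to G(\alpha,v)$ in $L^2(\Omega)$.

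Third, for Fr\'echet differentiability in $v$, the case $\alpha\le 0$ is trivial as $G(\alpha,v)=v$ and $L_{\alpha,v}=\mathrm{Id}$. Assume $\alpha>0$ and introduce the residual
$$R_w(x)=g(\alpha,v(x)+w(x))-g(\alpha,v(x))-H(\alpha,v(x),w(x)).$$
Since $g(\alpha,\cdot)$ is $C^1$ away from the origin and the segment from $v(x)$ to $v(x)+w(x)$ meets $0$ only at finitely many parameter values, the fundamental theorem of calculus gives, a.e.\ in $x$,
$$R_w(x)=\int_0^1\bigl[H(\alpha,v+sw,w)-H(\alpha,v,w)\bigr]\,ds,$$
whence the pointwise estimate $|R_w|\le C(|v|^\alpha+|w|^\alpha)|w|$. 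With $r=2p/(p-2)$, H\"older's inequality applied to the product $(|R_w|/|w|)\cdot|w|$ yields $\|R_w\|_{L^2}\le\||R_w|/|w|\|_{L^r}\|w\|_{L^p}$, so matters reduce to showing $\||R_w|/|w|\|_{L^r}\to 0$ as $\|w\|_{L^p}\to 0$. This follows by dominated convergence: along any subsequence with $w_n\to 0$ a.e.\ and $|w_n|\le h\in L^p(\Omega)$, the quotient $|R_{w_n}|/|w_n|$ tends to $0$ a.e.\ (by pointwise Fr\'echet differentiability of $g(\alpha,\cdot)$ on $\{v\ne 0\}$, and directly since $|R_{w_n}|/|w_n|=|w_n|^\alpha$ on $\{v=0\}$), dominated by $C(|v|^\alpha+h^\alpha)$, which lies in $L^r(\Omega)$ precisely because $\alpha r=2\alpha p/(p-2)\le p$ when $\alpha\le a$.

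The main obstacle is this third part in the range $0<\alpha<1$: there $g(\alpha,\cdot)$ is only $(\alpha+1)$-H\"older at the origin rather than $C^1$, so a standard $C^1$-Nemytski argument is unavailable. The H\"older split at exponent $r=2p/(p-2)$, whose integrability threshold coincides exactly with the constraint $\alpha\le a$, is what allows dominated convergence to absorb the singular behavior on $\{v=0\}$.
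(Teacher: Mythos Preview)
Your proof is correct and follows the same three-part structure as the paper's. The only substantive difference is in the third step (Fr\'echet differentiability for $\alpha>0$). The paper first establishes that the map $v\mapsto L_{\alpha,v}$ is continuous from $L^p(\Omega)$ into $\mathcal{L}(L^p(\Omega),L^2(\Omega))$ for each fixed $\alpha>0$, and then combines this operator-norm continuity with the same mean value formula $R_w=\int_0^1[L_{\alpha,v+sw}-L_{\alpha,v}]w\,ds$ to conclude $\|R_w\|_{L^2}=o(\|w\|_{L^p})$ directly. You bypass the operator-norm continuity and work on the remainder itself: the pointwise bound $|R_w|\le C(|v|^\alpha+|w|^\alpha)|w|$, the H\"older split at exponent $r=2p/(p-2)$, and dominated convergence on the quotient $|R_w|/|w|$. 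Both routes rest on the same ingredients (H\"older, dominated convergence, and the threshold $\alpha r\le p$, i.e.\ $\alpha\le a$); the paper's packaging yields $C^1$-regularity of $G$ in $v$ for fixed $\alpha>0$ as a by-product, while your argument isolates more transparently why the constraint $\alpha\le a$ is exactly what is needed.
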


\begin{proof}
Since $\Omega $ is bounded, it follows from the estimate
\begin{equation} \label{fEsta2} 
 |g(\alpha  ,v)| \le 
 \begin{cases} 
 |v|^{\alpha  +1} & \alpha  \ge 0,\\
 |v| & \alpha  <0, 
 \end{cases} 
\end{equation} 
that $G(\alpha ,v)\in L^2 (\Omega ) $ for all $\alpha <a$. Moreover, it follows easily 
from~\eqref{fEsta2} and the dominated convergence theorem that $G\in C((-\infty ,a)\times L^p (\Omega ), L^2 (\Omega ) )$. 
Next, we deduce from the estimate
\begin{equation} \label{fEstv} 
| H(\alpha ,v,u) | \le 
 \begin{cases} 
 (\alpha +1) |v|^{\alpha }  |u| & \alpha > 0, \\
 |u| & \alpha \le 0,
 \end{cases} 
\end{equation} 
 that 
\begin{equation*} 
 \|  L _{ \alpha ,v } u\| _{ L^2 } \le 
\begin{cases} 
 (\alpha +1)  \| v \| _{ L^p }^\alpha  |\Omega |^{\frac {a-\alpha } {a+1}}  \| u\| _{ L^p } &  0 < \alpha <a, \\
   |\Omega |^{\frac {a} {a+1}}  \| u \| _{ L^p   }& \alpha \le 0.
\end{cases} 
\end{equation*} 
Therefore, given any $\alpha <a$ and $v\in L^p (\Omega )$, we see that $L _{ \alpha ,v }$ is linear\footnote{Recall that we consider the space $L^2 (\Omega ) $ of complex-valued functions as a  {\bf real} Banach space.} and continuous $L^p (\Omega ) \to L^2 (\Omega ) $. 
It is not difficult to verify, using~\eqref{fEstv} and the dominated convergence theorem  that
\begin{equation} \label{fCont1} 
(\alpha ,v) \mapsto L _{ \alpha ,v } \text{ is continuous } ((-\infty ,a)\setminus \{0\}) \times L^p (\Omega )\to  {\mathcal L}( L^p (\Omega ), L^2 (\Omega ))  .
\end{equation}  
Given $0<\alpha <a$, $v, u \in L^p (\Omega )$ and $0<t\le 1$, it follows from~\eqref{fGrpv}   that
\begin{equation} \label{fFin1} 
\begin{split} 
 G(\alpha  , v + u) - G(  \alpha ,v )   -  L _{ \alpha ,v } u & = 
\int _0^1 H(\alpha ,v +\sigma u,u) \, d\sigma -  L _{ \alpha ,v } u\\ &  =
 \int _0^1 [ L_{\alpha ,v +\sigma u} u -  L _{ \alpha ,v } u]\, d\sigma. 
\end{split} 
\end{equation} 
Applying~\eqref{fCont1}, we deduce  that
\begin{equation} \label{fFin2} 
 \Bigl\|   \int _0^1 [ L_{\alpha ,v +\sigma u} u -  L _{ \alpha ,v } u]\, d\sigma \Bigr\| _{ L^2 } = o ( \|u\| _{ L^p })\quad  \text{as}\quad  \|u\| _{ L^p }\to 0. 
\end{equation} 
Thus we see that $G$ is Fr\'echet differentiable with respect to $v $ at $(\alpha , v)$ with derivative $L _{ \alpha ,v }$, provided $0<\alpha <a$. Since $G(\alpha , v)=v$ if $\alpha \le 0$, we see that $G (\alpha ,v)$ is differentiable at $(\alpha ,v)$ for any $L^p (\Omega ) $, with derivative $\partial _v G(0,v)= I$.  This completes the proof.
\end{proof} 

\begin{prop} \label{eDiffer2} 
Let $v \in L^p (\Omega )$ satisfy $v(x)\not = 0$ for a.a. $x\in \Omega $. 
It follows that  $\partial _v G (\alpha ,v)$ is continuous at $(0,v)$. 
\end{prop}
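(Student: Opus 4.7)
\medskip

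\noindent\textbf{Proof plan.} By Proposition~\ref{eDiffer1}, $\partial_v G(\alpha,v) = L_{\alpha,v}$ and $L_{0,v} = I$, so I must show that $L_{\alpha_n, v_n} \to I$ in $\mathcal{L}(L^p(\Omega), L^2(\Omega))$ whenever $(\alpha_n, v_n) \to (0,v)$ in $\R \times L^p(\Omega)$. Since $L_{\alpha_n, v_n} = I$ when $\alpha_n \le 0$, I may restrict attention to subsequences with $\alpha_n \in (0, a]$. For such $\alpha_n$, the definition of $H$ gives the pointwise bound
\begin{equation*}
  |L_{\alpha_n, v_n} u - u| \;\le\; \bigl(\,\bigl||v_n|^{\alpha_n} - 1\bigr| + \alpha_n |v_n|^{\alpha_n}\bigr)\, |u|.
\end{equation*}
Setting $q = 2(a+1)/a$, so that $\tfrac{1}{q} + \tfrac{1}{p} = \tfrac{1}{2}$, H\"older's inequality yields
\begin{equation*}
  \bigl\| L_{\alpha_n, v_n} - I \bigr\|_{\mathcal{L}(L^p, L^2)} \;\le\; \bigl\| |v_n|^{\alpha_n} - 1 \bigr\|_{L^q} + \alpha_n \bigl\| |v_n|^{\alpha_n} \bigr\|_{L^q}.
\end{equation*}

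The second term is easy: because $\alpha_n q \le aq = p$, one has $|v_n|^{\alpha_n q} \le 1 + |v_n|^p$, so $\| |v_n|^{\alpha_n} \|_{L^q}$ stays bounded (as $v_n$ is bounded in $L^p$), and the prefactor $\alpha_n \to 0$ kills this term.

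The first term is where the assumption $v \ne 0$ a.e.\ is used, and it is the main obstacle. I would argue by the subsequence principle: given any subsequence, extract a further subsequence along which $v_n \to v$ a.e.\ and $|v_n| \le h$ a.e.\ for some fixed $h \in L^p(\Omega)$. Since $|v(x)| > 0$ for a.a.\ $x$ and $\alpha_n \to 0$, $|v_n(x)|^{\alpha_n} \to 1$ for a.a.\ $x$. The inequality $|v_n|^{\alpha_n} \le 1 + |v_n|^a \le 1 + h^a$ (valid for $0 < \alpha_n \le a$) provides the dominating function: $(||v_n|^{\alpha_n} - 1|)^q \le (2 + h^a)^q$, which is integrable because $(h^a)^q = h^p \in L^1$. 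Dominated convergence then yields $\| |v_n|^{\alpha_n} - 1 \|_{L^q} \to 0$ along this further subsequence, and the subsequence principle upgrades this to convergence along the original sequence. This completes the continuity of $L_{\alpha,v}$ at $(0,v)$.
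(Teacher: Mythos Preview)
Your proof is correct and follows essentially the same route as the paper: split $L_{\alpha_n,v_n}-I$ into the multiplier piece $(|v_n|^{\alpha_n}-1)u$ and the piece $\alpha_n|v_n|^{\alpha_n-2}v_n\Re(\overline{v_n}u)$, bound the latter via $|v_n|^{\alpha_n}\le 1+|v_n|^a$, and kill the former by dominated convergence using $|v|>0$ a.e. Your treatment is in fact a bit more careful than the paper's, since you make the H\"older exponent $q=p/a$ explicit and use the subsequence principle to justify the a.e.\ convergence $|v_n|^{\alpha_n}\to 1$, a point the paper states without comment.
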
 
\begin{proof} 
Consider a sequence $(\alpha _n,v_n)_{n\ge 1}\subset \R\times L^p (\Omega )$ such that $\alpha _n\to 0$ in $\R$ and $v_n\to v$ in $L^p (\Omega )$. 
By~\eqref{fDParv}, we need to show that  ${ L}_{\alpha _n,v_n}\to { L}_{0,v}=I$ in ${\mathcal L}( L^p (\Omega ), L^2 (\Omega ))$ as $n\to \infty$. 
Since $L _{ \alpha ,v }=I$ if $\alpha \le 0$, we may assume that $\alpha _n>0$. We write
\begin{equation} \label{fDiffer21} 
{ L}_{\alpha _n,v_n }={ L}^1_{\alpha _n,v_n }+{ L}^2_{\alpha _n,v_n },
\end{equation} 
where 
\begin{equation*}
{ L}^1_{\alpha _n,v _n }u=   |v_n |^{\alpha _n}u , 
 \quad { L}^2_{\alpha _n,v _n}u=
\alpha_n |v_n |^{\alpha _n -2} v_n  \Re (  \overline{v_n } u ) . 
\end{equation*} 
 We first note that  $\alpha _n |v_n|^{\alpha _n}\le \alpha _n (1+ |v_n |^a)$,
so that
\begin{equation} \label{fDiffer24} 
 \|  { L}^2_{\alpha _n,v_n } \|  _{  {\mathcal L }(L^p, L^2)  } \le \alpha _n ( |\Omega |^{\frac {a} {p}} +  \| v _n \| _{ L^p }^a) \goto _{ n\to \infty  }0. 
\end{equation} 
Furthermore, since $|v|>0$ a.e. in $\Omega $, we see that 
\begin{equation*} 
|v_n|^ {\alpha_n }  \goto  _{ n\to \infty  }1,
\end{equation*} 
a.e. in $\Omega $, and  
it follows by dominated convergence that  
\begin{equation} \label{fDiffer25}
{ L}^1_{\alpha _n,v_n}\goto  _{ n\to \infty  } I,
\end{equation} 
in ${\mathcal L} (L^p (\Omega ), L^2 (\Omega ) )$. The result is now a consequence of \eqref{fDiffer21}--\eqref{fDiffer25}.
\end{proof}

\end{document}